\documentclass[10pt]{amsart}
\usepackage{amsmath}
\usepackage{latexsym}
\usepackage{amsfonts}
\usepackage{amssymb}

\theoremstyle{plain}
\newtheorem{theorem}{Theorem}

\newtheorem{lemma}[theorem]{Lemma}
\newtheorem{proposition}[theorem]{Proposition}
\newtheorem*{thm}{Theorem}

\theoremstyle{definition}
\newtheorem{definition}[theorem]{Definition}

\newcommand{\essi}{\operatornamewithlimits{ess\,inf}}
\newcommand{\esss}{\operatornamewithlimits{ess\,sup}}

\newcommand{\Om}{\Omega}
\newcommand{\lb}{\lambda}

\newcommand{\dif}{\mathrm{d}}

\textwidth=115mm \textheight=190mm \oddsidemargin=0mm
\evensidemargin=0mm \topmargin=0mm

\title[Kolmogorov compactness criterion in variable \ldots]
{Kolmogorov compactness criterion \\
in variable exponent Lebesgue spaces}
\author[H. Rafeiro]{Humberto Rafeiro}
\address{Universidade do Algarve\\ Departamento de Matem\'atica\\
 Campus de Gambelas 8005-139 Faro -- PORTUGAL}
\email{hrafeiro@ualg.pt}

\subjclass[2000]{46B50, 46E30}  %46B50 - compactness in Banach (or normed) spaces, 46E30 - spaces of measurable functions

\keywords{Kolmogorov compactness criterion, variable Lebesgue spaces}
\date{}

\begin{document}
\maketitle

\begin{abstract}
The well-known Kolmogorov compactness criterion is extended to the case of
variable exponent Lebesgue spaces $L^{p(\cdot)}(\overline{\Omega})$, where
$\Omega$ is a bounded open set in $\mathbb R^n$ and $p(\cdot)$ satisfies some
``standard'' conditions. Our final result should be called
 Kolmogorov-Tulajkov-Sudakov compactness criterion, since it includes the
 case $p_-=1$ and requires only the ``uniform'' condition.
\end{abstract}

\section{Introduction}
We extend Kolmogorov-Tulajkov-Sudakov criterion to the case of variable
exponent Lebesgue spaces $L^{p(\cdot)}(\Om)$. The theory of the variable
exponent Lebesgue spaces   was intensively developed during the last two
decades, inspired both by difficult open problems in this theory, and
possible applications shown in \cite{525}, we refer e.g.  to the surveying
papers \cite{106b, 316b, 580bd} on the topic  and references therein.

The classical theorem of Kolmogorov \cite{kol}(see also, e.g., \cite{nat})
about compactness of subsets in $L^p$, can be stated in the following terms
\begin{thm}[\textbf{Kolmogorov}]
Suppose  $\mathfrak F$ is a set of functions in $L^p\left([0,1]\right)(1<p<\infty)$. In order that this set be relatively compact,
it is necessary and sufficient that both of the following conditions be satisfied:
\begin{enumerate}\addtolength{\itemsep}{0.5\baselineskip}
\item[(k1)] the set $\mathfrak F$ is bounded in $L^p$; \item[(k2)]
$\lim\limits_{h\to 0}\|f_h-f\|_p=0$ uniformly with respect to $f\in\mathfrak
F$,
\end{enumerate}
where $f_h$ denotes the well-known Steklov function
\[
f_h(x)= \frac{1}{2h} \int_{x-h}^{x+h} f(t) \;\dif t .
\]
\end{thm}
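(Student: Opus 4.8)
The plan is to run the classical two-part argument, viewing the Steklov averaging $S_h f:=f_h$ as a bounded linear operator on $L^p([0,1])$. First I would extend every $f\in\mathfrak F$ by zero outside $[0,1]$, so that $f_h(x)$ is defined for $x$ near the endpoints and $\|f\|_{L^p(\mathbb R)}=\|f\|_p$; note that $f_h$ is then the convolution of $f$ with $\tfrac1{2h}\chi_{[-h,h]}$, so by Young's inequality $\|S_h f\|_p\le\|f\|_p$, and for each fixed $f$ the approximate-identity property gives $\|S_h f-f\|_p\to0$ as $h\to0$.

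\emph{Necessity.} If $\mathfrak F$ is relatively compact it is totally bounded, hence bounded, which is (k1). For (k2) I would fix $\varepsilon>0$, take a finite $\varepsilon$-net $g_1,\dots,g_N$ for $\mathfrak F$, and for arbitrary $f\in\mathfrak F$ choose $j$ with $\|f-g_j\|_p<\varepsilon$; then
\[
\|f_h-f\|_p\le\|(f-g_j)_h\|_p+\|(g_j)_h-g_j\|_p+\|g_j-f\|_p\le 2\varepsilon+\max_{1\le k\le N}\|(g_k)_h-g_k\|_p,
\]
using $\|S_h\|\le1$. Letting $h\to0$ drives the last term below $\varepsilon$ for all $k$ at once, so $\limsup_{h\to0}\sup_{f\in\mathfrak F}\|f_h-f\|_p\le3\varepsilon$; since $\varepsilon$ is arbitrary, (k2) holds.

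\emph{Sufficiency.} Assume (k1)--(k2). Since $L^p([0,1])$ is complete it is enough to show $\mathfrak F$ is totally bounded, and for that the key point is that for each fixed $h>0$ the set $\mathfrak F_h:=S_h\mathfrak F$ is relatively compact in $L^p([0,1])$. To see this, let $C:=\sup_{f\in\mathfrak F}\|f\|_p<\infty$ by (k1). By Hölder on the unit interval, $|f_h(x)|\le\tfrac1{2h}\|f\|_1\le\tfrac1{2h}\|f\|_p\le C/(2h)$, so $\mathfrak F_h$ is uniformly bounded; and, with $p'$ the conjugate exponent,
\[
|f_h(x)-f_h(y)|\le\tfrac1{2h}\|f\|_p\,\big\|\chi_{[x-h,x+h]}-\chi_{[y-h,y+h]}\big\|_{p'}\le\tfrac{C}{2h}\,(2|x-y|)^{1/p'},
\]
since the symmetric difference of the two intervals has length at most $2|x-y|$, so $\mathfrak F_h$ is uniformly equicontinuous on $[0,1]$. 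By Arzelà--Ascoli, $\mathfrak F_h$ is relatively compact in $C([0,1])$, hence (uniform convergence implying $L^p$-convergence on a finite-measure set) in $L^p([0,1])$. Finally, given $\varepsilon>0$ I would use (k2) to pick $h$ with $\sup_{f\in\mathfrak F}\|f_h-f\|_p<\varepsilon/2$ and cover the precompact set $\mathfrak F_h$ by finitely many $L^p$-balls of radius $\varepsilon/2$; the centers then form an $\varepsilon$-net for $\mathfrak F$, so $\mathfrak F$ is totally bounded and therefore relatively compact. The one genuinely delicate point --- everything else being the routine estimates above --- is the behaviour of the averaging near the endpoints of $[0,1]$, which the zero-extension handles cleanly without disturbing any of the norms used.
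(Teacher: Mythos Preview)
Your argument is correct. The paper itself does not prove this classical statement --- it merely quotes it and then proves the variable-exponent generalisation (Theorem~\ref{kolmogorov}) --- so the relevant comparison is with that proof. The necessity parts are essentially identical. For sufficiency, however, your route and the paper's diverge: you show directly that the single-average family $\mathfrak F_h$ is equicontinuous, via the H\"older bound
\[
|f_h(x)-f_h(y)|\le \tfrac{1}{2h}\|f\|_p\,\bigl|[x-h,x+h]\,\Delta\,[y-h,y+h]\bigr|^{1/p'}\le \tfrac{C}{2h}(2|x-y|)^{1/p'},
\]
and then apply Arzel\`a--Ascoli once. The paper instead passes to the \emph{double} averages $\mathfrak F_{hh}$, using only the uniform $L^\infty$ bound on $f_h$ to get Lipschitz continuity of $f_{hh}$, and then transfers back twice. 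Your approach is shorter and perfectly adequate in the constant-exponent range $1<p<\infty$, since there $p'<\infty$ and $|E|^{1/p'}\to 0$ gives the needed modulus of continuity. The paper's double averaging is not redundant in its own setting, though: when $p_-=1$ (the Tulajkov case the paper explicitly wants to include), the conjugate exponent may be $+\infty$ and $\|\chi_E\|_{q(\cdot)}$ need not vanish as $|E|\to 0$, so your H\"older estimate on $\mathfrak F_h$ would fail to produce equicontinuity; bootstrapping through $\mathfrak F_{hh}$ avoids this by replacing the H\"older step with a pointwise bound.
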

 After that, Tamarkin \cite{tam} extended the result to the case where the
 underlying space can be unbounded, with an additional condition related to
 behavior at infinity. Tulajkov \cite{tul} showed that Tamarkin's result was
 true even when $p=1$. Finally, Sudakov \cite{sud} showed that condition (k1)
 follows from condition (k2).

Kolmogorov's compactness criterion has also been extended to other function spaces, for example, it was extended by Takahashi \cite{tak} for \textit{Orlicz spaces} satisfying the $\Delta_2$-condition, by Goes and Welland \cite{goewel} for \textit{continuously regular K\"othe spaces} and by Musielak \cite{musbook} to \textit{Musielak-Orlicz spaces}, just to mention a few.

The main result of this paper is given in Theorems \ref{kolmogorov} and
\ref{kolmogorov1}.
 Although it seems that we can obtain our Theorems
\ref{kolmogorov} and \ref{kolmogorov1} from \cite{goewel}, we give its
straightforward proof, which is easier than to derive it from the multi-step
proof in \cite{goewel}. The direct proof we suggest is completely within the
frameworks of variable exponent Lebesgue spaces.

A version of the proof of Theorem \ref{kolmogorov} that appears in \cite[p.
63]{musbook} in the context of Musielak-Orlicz spaces, uses an extra
condition, the so-called \textit{$\infty$-condition} (see \cite[p.
61]{musbook}), but this condition is not satisfied even in the case of
constant $p=1$, while our proof admits the case $\inf p(x)=1$.

\section{Preliminaries}

We refer to papers
\cite{332,  575a} and surveys \cite{106b, 316b, 580bd}   for details on variable Le\-bes\-gue
spaces over domains  in $\mathbb R^n$, but  give some necessary definitions.
  Let $\Om\subseteq \mathbb{R}^n$ be an open set in $\mathbb{R}^n$. By
$L^{p(\cdot)}(\Om)$ we denote the space of functions
$f(x)$ on $\Om$ such that
\[    I_{p}(f)=\int_{\Om}|f(x)|^{p(x)}\;\dif x\]
where  $p(x)$ is a measurable function on $\Om$  with values in
$[1,\infty)$ and define $p_-=\essi_{x\in\Om}p(x)$ and $p_+=\esss_{x\in\Om}p(x)$.
This is a Banach  space with respect to the norm
\begin{equation}\label{f:2xsep5d}
    \|f\|_{L^{p(\cdot)}(\Om)}=\inf\left\{\lb>0: I_{p}\left(\frac{ f}{\lb}\right)\leqq 1\right\}.
\end{equation}

We list the following properties of the space $L^{p(\cdot)}(\Omega)$ that will be needed:

\begin{itemize}
\item[(a)]\textit{H\"older's inequality}:
\begin{equation}\label{holder}
\int_\Omega |f(x)g(x)|\;\dif x \leqq k \|f\|_{p(\cdot)} \|g\|_{q(\cdot)}
\end{equation}
where $1\leqq p(x)\leqq \infty$, $\frac{1}{p(x)}+\frac{1}{q(x)}\equiv 1$, $k=\sup\limits_{x\in
\Omega}\frac{1}{p(x)}+ \sup\limits_{x\in \Omega}\frac{1}{q(x)}$. \item[(b)] \textit{estimates for
the norm of the characteristic function of a set}:
\begin{equation}\label{estcha}
|E|^{1/p_+}\leqq \|\chi_E\|_{p(\cdot)}\leqq |E|^{1/p_-},\quad \mathrm{if}\; |E|\leqq 1 ,
\end{equation}
the signs of the inequalities being opposite if $|E|\geqq 1$. \item[(c)]\textit{denseness of step
functions}: in the case $p_+<\infty$, functions of the form $\sum_{k=1}^m c_k \chi_{\Omega_k}$,
$\Omega_k\subset \Omega$, $|\Omega_k|<\infty$, with constant $c_k$, form a dense set in
$L^{p(\cdot)}(\Omega)$. \item[(d)]\textit{denseness of continuous functions}: in the case
$p_+<\infty$,  the set of continuous functions with a finite support is dense in
$L^{p(\cdot)}(\Omega)$. \item[(e)]$L^{p(\cdot)}(\Omega)$ \textit{is ideal}: i.e. it is complete and
the inequality $|f|\leqq |g|$ implies $\|f\|_{p(\cdot)} \leqq \|g\|_{p(\cdot)}$.
\end{itemize}

Standard conditions in the framework of variable Lebesgue space are
\begin{equation}\label{log}
|p(x)-p(y)|\leqq \frac{C}{-\ln|x-y|}, \quad |x-y|\leqq \frac12 ,
\end{equation}
 and
\begin{equation}\label{decay}
|p(x)-p(\infty)|\leqq \frac{C}{\ln(2+|x|)}.
\end{equation}

\begin{definition}
By $w$-$Lip(\Omega)$ we denote the class of exponents $p\in L^\infty(\Omega)$ satisfying the (local) logarithmic condition \eqref{log}.
\end{definition}

\begin{definition}
 By $\mathcal P_\infty(\Omega)$ we denote the class of exponents $p\in L^\infty (\Omega)$ which satisfy the condition that there exists $p(\infty)=\lim\limits_{\Omega \ni x \to \infty} p(x)$ and the assumption \eqref{decay}.
\end{definition}

\subsection{Approximate identities}

Let $\phi$ be an integrable function which have $\int_{\mathbf R^n} \phi(x)\;\dif x =1$. For each $t>0$,
we put $\phi_t:=t^{-n}\phi(t^{-1}x)$. Following \cite{urifio07}, we say that $\{\phi_t \}$ is a
potential-type approximate identity, if the radial majorant of $\phi$, defined by
$$\widetilde{\phi}(x)=\sup_{|y|\geq|x|} |\phi(y)|$$
is integrable. In \cite{urifio07} the following proposition was proved.

\begin{proposition}\label{urifio}
Given an open set $\Omega$, let $p\in\mathcal{P}_\infty(\Om)\cap w$-$Lip(\Omega)$. If  $\{\phi_t\}$
is a potential-type approximate identity, then for all $t>0$, we have:

\begin{itemize}\addtolength{\itemsep}{0.5\baselineskip}
\item[\textnormal{(i)}]$\| \phi_t \ast f   \|_{p(\cdot)}  \leqq C \| f   \|_{p(\cdot)}$, 
\item[\textnormal{(ii)}] $\displaystyle \lim_{t \to 0} \| \phi_t \ast f -f
\|_{p(\cdot)}=0 .$
\end{itemize}
\end{proposition}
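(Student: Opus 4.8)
The plan is to deduce (ii) from (i) by a density argument, and to prove (i) by duality. The temptation in (i) is to use the pointwise bound $|\phi_t\ast f(x)|\leqq c\,Mf(x)$ ($M$ the Hardy--Littlewood maximal operator) followed by the maximal inequality on $L^{p(\cdot)}$; but that inequality needs $p_->1$, which we want to avoid. Instead one should pass the convolution onto the test function. Extend $f$ and $p$ to all of $\mathbb{R}^n$ so that \eqref{log} and \eqref{decay} still hold, and take any $g$ (also extended by zero) with $\|g\|_{q(\cdot)}\leqq1$, where $\frac1{p(x)}+\frac1{q(x)}\equiv1$. By Fubini and the bound $|\phi_t(-z)|\leqq\widetilde{\phi}_t(z)$,
\[
\int_{\Omega}|(\phi_t\ast f)(x)\,g(x)|\,\dif x\leqq\int_{\mathbb{R}^n}|f(y)|\,(\widetilde{\phi}_t\ast|g|)(y)\,\dif y .
\]
Since $\widetilde{\phi}$ is a radially non-increasing integrable function, the classical estimate $\widetilde{\phi}_t\ast|g|\leqq\|\widetilde{\phi}\|_1\,Mg$ holds pointwise, so by H\"older's inequality \eqref{holder} the right-hand side is at most $k\,\|\widetilde{\phi}\|_1\,\|f\|_{p(\cdot)}\,\|Mg\|_{q(\cdot)}$. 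Taking the supremum over such $g$ and invoking the standard norm--associate-space duality of variable Lebesgue spaces, (i) will follow provided $M$ is bounded on $L^{q(\cdot)}(\mathbb{R}^n)$.

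This last point is where the hypotheses are used, and it is exactly what makes the case $p_-=1$ admissible. Since $p\in L^\infty$ we have $p_+<\infty$, hence $q_-=(p_+)'>1$ (the degenerate case $p\equiv1$ being disposed of directly by Young's inequality), while $q$ inherits \eqref{log} and \eqref{decay} because $\frac1q=1-\frac1p$ has the same log-H\"older modulus and the same decay as $\frac1p$. Thus the maximal operator is bounded on $L^{q(\cdot)}(\mathbb{R}^n)$ by the standard theorem, even though it need not be bounded on $L^{p(\cdot)}$ itself.

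For (ii) I would combine (i) with the density of compactly supported continuous functions in $L^{p(\cdot)}(\Omega)$. Given $\varepsilon>0$ choose such a $g$ with $\|f-g\|_{p(\cdot)}<\varepsilon$; then by (i),
\[
\|\phi_t\ast f-f\|_{p(\cdot)}\leqq\|\phi_t\ast(f-g)\|_{p(\cdot)}+\|\phi_t\ast g-g\|_{p(\cdot)}+\|g-f\|_{p(\cdot)}\leqq(C+1)\varepsilon+\|\phi_t\ast g-g\|_{p(\cdot)},
\]
so it suffices to show $\|\phi_t\ast g-g\|_{p(\cdot)}\to0$ as $t\to0$ for a fixed continuous $g$ with compact support. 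For this one shows that the modular $I_p(\phi_t\ast g-g)$ tends to $0$, which forces the norm to $0$ since $p_+<\infty$. On a ball $B$ containing the support of $g$ one has $\phi_t\ast g\to g$ uniformly (uniform continuity of $g$, $\int\phi=1$, and integrability of $\widetilde{\phi}$), so the contribution of $B$ to the modular is at most $|B|\,\|\phi_t\ast g-g\|_{\infty}^{p_-}$ once $\|\phi_t\ast g-g\|_\infty\leqq1$; away from $B$ the function $g$ vanishes and the size of $\phi_t\ast g$ is controlled by $\int_{|z|\geqq r/t}\widetilde{\phi}$, so the remaining tail of the modular is handled using \eqref{decay} (and this tail is simply absent when $\Omega$ is bounded, which is the case of interest here).

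The main obstacle is the idea in the first paragraph: one must resist estimating $\phi_t\ast f$ by $Mf$ and instead dualize, which transfers the maximal operator to the conjugate exponent $q(\cdot)$, where the maximal inequality is available irrespective of the value of $p_-$. Once this is recognised the rest is routine; the only further technicality, the behaviour at infinity on unbounded $\Omega$ in step (ii), is taken care of by the decay condition \eqref{decay}.
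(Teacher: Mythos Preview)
The paper does not supply a proof of this proposition at all: it is stated as a quotation of the result of Cruz--Uribe and Fiorenza \cite{urifio07} and is used throughout as a black box (Lemma~\ref{steklovappro} simply invokes it). So there is nothing in the paper to compare your argument against line by line.

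That said, your approach is correct and is in fact the right idea behind the cited result. The essential point---dualising so that the pointwise bound by the maximal function is applied to the test function in $L^{q(\cdot)}$ rather than to $f$ in $L^{p(\cdot)}$---is precisely what allows $p_-=1$, since $p_+<\infty$ forces $q_->1$ and the log--H\"older and decay hypotheses transfer to $1/q=1-1/p$. Your density argument for (ii) is the standard one. One technical point worth making explicit: when $p_-=1$ one has $q_+=\infty$, so you are implicitly invoking the version of the maximal theorem on $L^{q(\cdot)}(\mathbb{R}^n)$ that permits an unbounded exponent; this is available (it is the formulation in terms of the log--H\"older continuity of $1/q$), and likewise the associate--space duality $\|h\|_{p(\cdot)}\simeq\sup_{\|g\|_{q(\cdot)}\leqq1}\int|hg|$ holds in this generality, but both deserve a reference rather than being left tacit.
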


The following propositions are well-known, and can be found in standard books dealing with
the subject of compactness, they are stated for the sake of completeness.

\begin{definition}
Let $(X,d)$ be a metric space, $X_0$ a subspace of $X$, and $\varepsilon>0$.
A subset $N$ in $X$ is called a \textit{Hausdorff} $\varepsilon$\textit{-net} (or just $\varepsilon$\textit{-net}) for $X_0$ if
for each element $x\in X_0$ there is a $x_\varepsilon \in N$ such that $d(x,x_\varepsilon)<\varepsilon$.
\end{definition}

\begin{definition}
A subset $X_0$ of a metric space $X$ is called \textit{totally bounded} if for every $\varepsilon>0$ in $X$ there is a
finite (i.e., consisting of finitely many elements) $\varepsilon$-net for $X_0$.
\end{definition}

\begin{proposition}\label{hausdorff}
A subset of a complete metric space is relatively compact (i.e., its closure is compact) if and only if it is totally bounded.
\end{proposition}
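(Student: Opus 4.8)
The plan is to prove the two implications separately, in both cases reducing matters to the equivalence of compactness and sequential compactness that holds in metric spaces.

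For the ``only if'' direction, assume $\overline{X_0}$ is compact and fix $\varepsilon>0$. The open balls $\{B(x,\varepsilon):x\in\overline{X_0}\}$ form a cover of $\overline{X_0}$, so by compactness finitely many of them, say centred at $x_1,\dots,x_m$, already cover $\overline{X_0}\supseteq X_0$; then $\{x_1,\dots,x_m\}$ is a finite $\varepsilon$-net for $X_0$. This part is routine bookkeeping.

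For the ``if'' direction, assume $X_0$ is totally bounded. First I would note that $\overline{X_0}$ is again totally bounded: given $\varepsilon>0$, choose a finite $\tfrac{\varepsilon}{2}$-net $N$ for $X_0$; since every point of $\overline{X_0}$ lies within $\tfrac{\varepsilon}{2}$ of a point of $X_0$, hence within $\varepsilon$ of a point of $N$, the set $N$ is an $\varepsilon$-net for $\overline{X_0}$. Since $\overline{X_0}$ is closed in the complete metric space $X$, it is itself complete, so it suffices to show that a metric space $Y$ which is both complete and totally bounded is compact. I would do this through sequential compactness: given a sequence $(y_k)$ in $Y$, cover $Y$ by finitely many balls of radius $1$; one of them, $B_1$, contains $y_k$ for infinitely many indices $k$. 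Covering the totally bounded set $\overline{B_1}\cap Y$ by finitely many balls of radius $\tfrac12$, then iterating with radii $\tfrac1n$, I obtain a nested chain of infinite index sets from which a diagonal choice extracts a subsequence $(y_{k_j})$ that is Cauchy; by completeness it converges in $Y$. Thus $Y$ is sequentially compact, and since in metric spaces sequential compactness is equivalent to compactness, $Y$ is compact, and therefore $\overline{X_0}$ is compact.

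The only steps requiring genuine care are the extraction of a Cauchy subsequence from total boundedness (the nested-balls and diagonal argument) and the standard, but not entirely trivial, passage from sequential compactness to compactness in a metric space; everything else is elementary. As this proposition is classical I would keep the write-up short, or simply cite a standard reference for the sequential-compactness equivalence rather than reproving it.
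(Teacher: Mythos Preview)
Your argument is correct and entirely standard; there is no gap. The paper, however, does not supply a proof at all: it explicitly lists this proposition among facts that ``are well-known, and can be found in standard books dealing with the subject of compactness,'' so your own closing suggestion---to simply cite a reference rather than reprove it---matches exactly what the paper does.
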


\begin{definition}
Let $X$ and $Y$ be metric spaces. A family $\mathcal F$ of functions $f:X \to Y$ is \textit{equicontinuous
on} $X$ if for every $\varepsilon>0$ there exists a $\delta >0$ such that $d_Y(f(x_1),f(x_2))<
\varepsilon$ for any function $f\in \mathcal F$  and all $x_1,x_2 \in X$ such that $d_X(x_1,x_2)<\delta$.
\end{definition}

\begin{definition}
The functions of a set $\mathcal F $ are said to be \textit{uniformly bounded} if
$\displaystyle\sup_{f\in \mathcal F} \sup_{x\in X} |f(x)|<\infty.$
\end{definition}

\begin{proposition}[\bf{Ascoli-Arzel\`a}]\label{AscArz}
Let $\Omega$ be a bounded open set in $\mathbb R^n$. A subset $K$ of $C(\overline{\Omega})$ is relatively compact  in $C(\overline{\Omega})$ if $K$ is equicontinuous and is also uniformly bounded.
\end{proposition}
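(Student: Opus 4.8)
The plan is to reduce everything to Proposition \ref{hausdorff}. Since $C(\overline{\Omega})$, equipped with the uniform norm $\|g\|_{C(\overline{\Omega})}=\sup_{x\in\overline{\Omega}}|g(x)|$, is a complete metric space, it suffices to show that the equicontinuous, uniformly bounded set $K$ is totally bounded, i.e. that for every $\varepsilon>0$ there is a finite $\varepsilon$-net for $K$. So fix $\varepsilon>0$.

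First I would extract from equicontinuity a $\delta>0$ such that $|f(x_1)-f(x_2)|<\varepsilon/3$ for every $f\in K$ whenever $x_1,x_2\in\overline{\Omega}$ and $|x_1-x_2|<\delta$. Since $\Omega$ is bounded, $\overline{\Omega}$ is a compact subset of $\mathbb R^n$, so it can be covered by finitely many balls $B(y_1,\delta),\dots,B(y_N,\delta)$ with centres $y_j\in\overline{\Omega}$ — this is the one place where boundedness of $\Omega$ is essential. Next, using uniform boundedness, fix $M$ with $|f(x)|\le M$ for all $f\in K$ and all $x\in\overline{\Omega}$, and partition $[-M,M]$ into finitely many subintervals $J_1,\dots,J_m$, each of length $<\varepsilon/3$.

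The core is then a finite combinatorial bookkeeping step. Declare two functions $f,g\in K$ \emph{equivalent} if, for each $j=1,\dots,N$, the values $f(y_j)$ and $g(y_j)$ belong to one common interval $J_\ell$; there are at most $m^N$ possible ``index patterns'' on the sample points $y_1,\dots,y_N$, so there are only finitely many equivalence classes. Choosing one representative $f_\nu$ from each nonempty class produces a finite family $\{f_1,\dots,f_r\}\subset K$, and I claim it is an $\varepsilon$-net. Given $f\in K$, let $f_\nu$ be the representative of its class; for arbitrary $x\in\overline{\Omega}$ pick $j$ with $|x-y_j|<\delta$, and then
\[
|f(x)-f_\nu(x)|\le |f(x)-f(y_j)|+|f(y_j)-f_\nu(y_j)|+|f_\nu(y_j)-f_\nu(x)|<\frac{\varepsilon}{3}+\frac{\varepsilon}{3}+\frac{\varepsilon}{3}=\varepsilon,
\]
the outer two terms by the choice of $\delta$ and the middle term because $f(y_j)$ and $f_\nu(y_j)$ lie in a common $J_\ell$ of length $<\varepsilon/3$. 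Taking the supremum over $x\in\overline{\Omega}$ gives $\|f-f_\nu\|_{C(\overline{\Omega})}\le\varepsilon$. Hence $K$ is totally bounded, and Proposition \ref{hausdorff} yields that $K$ is relatively compact in $C(\overline{\Omega})$.

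I do not expect a genuine obstacle: the only subtleties are that $\overline{\Omega}$ must truly be compact so that the finite $\delta$-ball cover exists, and that the three-term telescoping estimate be split with $\varepsilon/3$ (rather than $\varepsilon/2$) so that the discretization of the range can be absorbed. An alternative is the classical sequential proof — diagonalize over a countable dense subset of $\overline{\Omega}$ to get pointwise convergence of a subsequence, then promote it to uniform convergence via equicontinuity — but the total-boundedness argument above is shorter and dovetails directly with Proposition \ref{hausdorff}.
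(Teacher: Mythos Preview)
Your argument is correct and is essentially the standard total-boundedness proof of Ascoli--Arzel\`a; the only cosmetic point is that the pointwise strict bound $<\varepsilon$ gives $\|f-f_\nu\|_{C(\overline{\Omega})}\le\varepsilon$ rather than $<\varepsilon$, but since $\varepsilon>0$ is arbitrary this is harmless (or replace $\varepsilon/3$ by $\varepsilon/4$).

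As for comparison with the paper: there is nothing to compare. The paper does not prove Proposition~\ref{AscArz} at all; it is listed among results that ``are well-known, and can be found in standard books dealing with the subject of compactness, they are stated for the sake of completeness.'' Your proof supplies exactly what the paper chose to omit, and it meshes cleanly with the paper's setup since it invokes Proposition~\ref{hausdorff} as the underlying compactness criterion.
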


\section{Compactness criterion}
\subsection{Steklov function}
We will use both $f_h$ or $\Phi_h\ast f$ to denote the Steklov function
\begin{equation}\label{steklov}
%\begin{split}
f_h(x)=\Phi_h\ast f(x)=\frac{1}{v_nh^n }\int_{B(x,h)} f(t)\;\dif t
%\end{split}
\end{equation}
where $h>0, \ \Phi(x)=\frac{1}{v_n}\chi_{B(0,1)}(x)$, $\Phi_h(x)=
\frac{1}{h^n}\Phi\left(\frac{x}{h}\right)$,\ $
v_n=|B(0,1)|=\frac{\pi^{\frac{n}{2}}}{\Gamma\left(\frac{n}{2}+1\right)},$ and it is assumed that
the function $f(x)$ is continued beyond $\Om$ as identical zero, whenever necessary.

\begin{lemma}\label{steklovappro}Given an open set $\Omega$,
 let $p\in\mathcal{P}_\infty(\Om)\cap w$-$Lip(\Omega)$. Then
\begin{itemize}\addtolength{\itemsep}{0.5\baselineskip}
\item[\textnormal{(i)}] $\|f_h\|_{p(\cdot)}\leqq C\|f\|_{p(\cdot)};$
\item[\textnormal{(ii)}] $\lim\limits_{h\to 0} \|f_h-f  \|_{p(\cdot)}=0$.
\end{itemize}
where $C>0$ does not depend on $h>0$ and $f\in L^{p(\cdot)}(\Om).$
\end{lemma}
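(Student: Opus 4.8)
The plan is to recognize the Steklov function as a convolution with a dilated kernel and then invoke Proposition~\ref{urifio}. By the definition~\eqref{steklov}, $f_h=\Phi_h\ast f$ with $\Phi_h(x)=h^{-n}\Phi(h^{-1}x)$ and $\Phi=v_n^{-1}\chi_{B(0,1)}$, where $f$ is extended by zero outside $\Om$ as agreed after~\eqref{steklov}. Hence it suffices to verify that $\{\Phi_h\}_{h>0}$ is a potential-type approximate identity in the sense recalled before Proposition~\ref{urifio}; then statements (i) and (ii) of the present lemma are precisely statements (i) and (ii) of that proposition applied with $\phi_t:=\Phi_h$ (that is, $t=h$), the constant $C$ being independent of $h$, since $\Om$ is open and $p\in\mathcal P_\infty(\Om)\cap w$-$Lip(\Om)$ by hypothesis.

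Two elementary facts complete the verification. First, $\int_{\rn}\Phi(x)\,\dif x=v_n^{-1}|B(0,1)|=1$, so $\Phi$ has the required normalization. Second, the radial majorant $\wt\Phi(x)=\sup_{|y|\geq|x|}|\Phi(y)|$ must be integrable: since $\Phi$ equals the constant $v_n^{-1}$ on $B(0,1)$ and vanishes elsewhere, it is (weakly) radially decreasing, so $\wt\Phi=\Phi$, which lies in $L^1(\rn)$. Therefore $\{\Phi_h\}$ is a potential-type approximate identity, and the lemma follows at once from Proposition~\ref{urifio}.

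I do not expect a genuine obstacle here: the whole content of the argument is the bookkeeping that the specific averaging kernel of~\eqref{steklov} meets the hypotheses of Proposition~\ref{urifio}, after which the two assertions are immediate. The only point to be slightly careful about is the zero-extension convention for $f$ outside $\Om$, which is already fixed in the definition of the Steklov function and is exactly the convention under which Proposition~\ref{urifio} is stated.
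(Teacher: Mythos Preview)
Your proof is correct and follows exactly the paper's approach: observe that $\{\Phi_h\}$ is a potential-type approximate identity and apply Proposition~\ref{urifio}. You merely spell out the easy verifications (normalization and integrability of the radial majorant) that the paper leaves implicit.
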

\begin{proof}
It suffices to note  that $\{\Phi_h\}$ is a potential-type approximate identity and apply
Proposition \ref{urifio}.
\end{proof}

\subsection{Kolmogorov compactness criterion}The following result is an extension to the variable Lebesgue spaces of the well known Kolmogorov compactness criterion (or, to be more precise, Kolmogorov-Tulajkov compactness criterion since we admit $p_-=1$).

\begin{theorem}\label{kolmogorov}
Let $\mathfrak F$ be a subset of $L^{p(\cdot)}(\overline\Omega)$, where $\Omega$ is a bounded open set in
 $\mathbb R^n$ and let $p\in\mathcal{P}_\infty(\overline\Omega)\cap w$-$Lip(\overline\Omega)$. The set $\mathfrak F$ is relatively compact if and only if the
following conditions are satisfied:
\begin{itemize}\addtolength{\itemsep}{0.5\baselineskip}
\item[\textnormal{(i)}]$ \lim\limits_{h \to 0} \| f_h -f \|_{p(\cdot)}=0$ uniformly for $f \in \mathfrak F$,
\item[\textnormal{(ii)}]$\mathfrak F$ is bounded in $L^{p(\cdot)}(\overline\Omega)$.
\end{itemize}
\end{theorem}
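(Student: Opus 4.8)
The plan is to prove the two directions separately, with the sufficiency (conditions (i)--(ii) imply relative compactness) being the substantive part. For \emph{necessity}, suppose $\mathfrak F$ is relatively compact. Then it is totally bounded by Proposition~\ref{hausdorff}, hence bounded, giving (ii). For (i), fix $\ve>0$ and take a finite $\ve/3$-net $g_1,\dots,g_m$ for $\mathfrak F$. By Lemma~\ref{steklovappro}(ii) applied to each $g_j$, choose $h_0>0$ so small that $\|(g_j)_h-g_j\|_{\pc}<\ve/3$ for all $j$ and all $0<h<h_0$. For arbitrary $f\in\mathfrak F$ pick $g_j$ with $\|f-g_j\|_{\pc}<\ve/3$; then using linearity of the Steklov operator and Lemma~\ref{steklovappro}(i) with constant $C$ (one first arranges the net to be an $\ve/(3(C+1))$-net so the constant is absorbed), the triangle inequality
\[
\|f_h-f\|_{\pc}\leqq \|f_h-(g_j)_h\|_{\pc}+\|(g_j)_h-g_j\|_{\pc}+\|g_j-f\|_{\pc}
\]
yields the uniform bound, proving (i).

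For \emph{sufficiency}, the strategy is the classical one: approximate $\mathfrak F$ by the image under the Steklov operator, show that image is relatively compact in $C(\ol\Omega)$ via Ascoli--Arzel\`a, and then transfer back to $\pcG$ using Proposition~\ref{hausdorff} (total boundedness). Concretely, fix $\ve>0$. By (i), choose $h>0$ with $\|f_h-f\|_{\pc}<\ve/2$ for every $f\in\mathfrak F$. I then consider the family $\mathfrak F_h=\{f_h:f\in\mathfrak F\}\sbs C(\ol\Omega)$. The first claim is that $\mathfrak F_h$ is uniformly bounded: from \eqref{steklov}, $|f_h(x)|\leqq \frac{1}{v_nh^n}\int_{B(x,h)}|f(t)|\,\dif t$, and by H\"older's inequality \eqref{holder} this is $\leqq \frac{k}{v_nh^n}\|f\|_{\pc}\|\chi_{B(x,h)}\|_{q(\cdot)}$, which by \eqref{estcha} is bounded by a constant depending only on $h$, $n$, $\Omega$ and $\sup_{f}\|f\|_{\pc}$ (finite by (ii)). The second claim is equicontinuity: for $x_1,x_2\in\ol\Omega$,
\[
|f_h(x_1)-f_h(x_2)|\leqq \frac{1}{v_nh^n}\int_{B(x_1,h)\triangle B(x_2,h)}|f(t)|\,\dif t \leqq \frac{k}{v_nh^n}\|f\|_{\pc}\,\|\chi_{B(x_1,h)\triangle B(x_2,h)}\|_{q(\cdot)},
\]
and since $|B(x_1,h)\triangle B(x_2,h)|\to 0$ as $|x_1-x_2|\to 0$ uniformly in the centers, \eqref{estcha} makes the right-hand side small uniformly in $f\in\mathfrak F$. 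Hence $\mathfrak F_h$ satisfies the hypotheses of Proposition~\ref{AscArz} and is relatively compact in $C(\ol\Omega)$.

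To conclude, relative compactness in $C(\ol\Omega)$ gives a finite $\dl$-net $\{f_h^{(1)},\dots,f_h^{(N)}\}$ for $\mathfrak F_h$ in the sup-norm, where $\dl$ is chosen so that $\dl\,\|\chi_{\ol\Omega}\|_{\pc}<\ve/2$ (here $|\ol\Omega|<\infty$, so $\|\chi_{\ol\Omega}\|_{\pc}<\infty$ by \eqref{estcha}); then by the ideal property (e), for $g\in C(\ol\Omega)$ one has $\|g\|_{\pc}\leqq \|g\|_\infty\|\chi_{\ol\Omega}\|_{\pc}$, so a sup-norm $\dl$-net is a $\pcG$-norm $\ve/2$-net for $\mathfrak F_h$. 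Combining with $\|f_h-f\|_{\pc}<\ve/2$, the set $\{f_h^{(1)},\dots,f_h^{(N)}\}$ is an $\ve$-net for $\mathfrak F$ in $\pcG$. Since $\ve>0$ was arbitrary, $\mathfrak F$ is totally bounded, hence relatively compact by Proposition~\ref{hausdorff}. The main obstacle — and the only place where the ``standard conditions'' $p\in\cP_\infty(\ol\Omega)\cap w$-$\Lip(\ol\Omega)$ are truly needed — is Lemma~\ref{steklovappro}, i.e. that the Steklov averaging is bounded on $\pcG$ and converges to the identity; everything else is soft, using only boundedness of $\Omega$, H\"older, the characteristic-function estimates, and the ideal property, so in particular $p_-=1$ is permitted throughout.
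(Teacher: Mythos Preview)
Your necessity argument is essentially the paper's. The sufficiency argument, however, has a genuine gap precisely in the case you single out at the end, $p_-=1$.

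The problem is the equicontinuity step for $\mathfrak F_h$. You bound
\[
|f_h(x_1)-f_h(x_2)|\leqq \frac{k}{v_nh^n}\,\|f\|_{p(\cdot)}\,\|\chi_{B(x_1,h)\triangle B(x_2,h)}\|_{q(\cdot)}
\]
and then appeal to \eqref{estcha} to make $\|\chi_E\|_{q(\cdot)}$ small with $|E|$. But \eqref{estcha} for the conjugate exponent requires $q_+<\infty$, i.e.\ $p_->1$. When $p_-=1$ the quantity $\|\chi_E\|_{q(\cdot)}$ need not tend to $0$ as $|E|\to 0$: for instance with $\Omega=(0,1)$ and $p(x)=1+x$ (which is in $w$-$\Lip$), one has $q(x)=1+1/x$, and a direct modular computation gives $\|\chi_{(0,\ve)}\|_{q(\cdot)}=1$ for every $\ve\in(0,1]$. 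Worse, the conclusion you are after is actually false under (ii) alone: in $L^1(0,1)$ the bounded family $f_n=n\chi_{(0,1/n)}$ has Steklov means $(f_n)_h$ with Lipschitz constants of order $n/h$, so $\mathfrak F_h$ is \emph{not} equicontinuous. Thus no repair of the estimate will save the single--Steklov route when $p_-=1$.

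The paper avoids this by iterating the averaging: one first observes that $\mathfrak F_h$ is uniformly bounded in the sup norm (this needs only H\"older together with the embedding $L^{p(\cdot)}(\overline\Omega)\hookrightarrow L^1(\overline\Omega)$ on a bounded domain, so it is valid for $p_-=1$), and then forms $\mathfrak F_{hh}=\{(f_h)_h\}$. Averaging uniformly bounded functions gives
\[
|f_{hh}(x+u)-f_{hh}(x)|\leqq \frac{\|f_h\|_\infty}{v_nh^n}\,|B(x+u,h)\triangle B(x,h)|\leqq C\,|u|,
\]
so $\mathfrak F_{hh}$ is equi-Lipschitz and Ascoli--Arzel\`a applies. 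One then transfers relative compactness back to $\mathfrak F$ by using condition (i) twice (from $\mathfrak F_{hh}$ to $\mathfrak F_h$, then to $\mathfrak F$). If you are willing to assume $p_->1$, your single--Steklov argument is correct and a bit shorter; to cover $p_-=1$ you need the double--Steklov trick.
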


\begin{proof}
\textit{Necessity}.

We only have to prove the equicontinuity (i).
 By Proposition
\ref{hausdorff}, for every
$\varepsilon >0$, there exists a finite $\varepsilon$-net for the set $\mathfrak F$. Since step functions are dense in $L^{p(\cdot)}(K)$, the finite
$\varepsilon$-net  can be chosen as the set $\{s_j\}_{j=1}^\ell$ of simple functions $s_j(x)$ with
the property
\begin{equation}\label{density}
\|f-s_j\|_{p(\cdot)}<\varepsilon.
\end{equation}
To prove (i), we first note that, by Lemma \ref{steklovappro}(ii), given $\varepsilon >0$, there
exists an  $h_j$ indexed to each $s_j$ such that
\[
\| \Phi_h\ast s_j -s_j \|_{p(\cdot)} < \varepsilon
\]
whenever $h<h_j$. Letting $h_0 =\min_{1\leqq j\leqq \ell} h_j$, we have
\[
\| \Phi_h\ast s_j -s_j\|_{p(\cdot)}   < \varepsilon
\]
for all $j=1,\ldots,\ell$ whenever $h<h_0$.

Then for $h<h_0$ and all $f \in \mathfrak F$ we have a suitable $s_r$ such that
\[
\begin{split}
\| \Phi_h \ast f-f \|_{p(\cdot)} &\leqq   \|\Phi_h \ast f-\Phi_h \ast s_r \|_{p(\cdot)} + \|s_r-f\|_{p(\cdot)}+\|\Phi_h \ast s_r-s_r  \|_{p(\cdot)}\\
& \leqq (C+1) \|f-s_r\|_{p(\cdot)}+\varepsilon=(C+2)\varepsilon
\end{split}
\]
where $C$ is from Lemma \ref{steklovappro}, which gives the necessity of (i).

\vspace{3mm}
\textit{Sufficiency}.

Let  $\mathfrak F_h=\{f_h:f\in \mathfrak F\}$, where $f_h$ is the Steklov
function \eqref{steklov}. By H\"older's inequality \eqref{holder} and property \eqref{estcha}, we obtain
\begin{equation}\label{steuni}
\begin{split}
v_n h^n|f_h(x)|& \leqq k\|f \|_{p(\cdot)} \|\chi_{B(x,h)} \|_{q(\cdot)}\\
& \leqq kM(v_n h^n)^{1/q_-}
\end{split}
\end{equation}
whenever $v_nh^n\leqq 1$. This means that all the functions in $\mathfrak F_h$ are uniformly
bounded under suitable choice of $h$ due to condition (ii).

Let us define another set, namely, $\mathfrak F_{hh}=\{f_h:f\in \mathfrak F_h\}$. Functions of $\mathfrak F_{hh}$ are of the form
\begin{equation}
f_{hh}(x)=\frac{1}{v_nh^n}\int_{B(x,h)}f_h(t)\;\dif t.
\label{fhh}
\end{equation}
By \eqref{steuni} and using the same considerations made above, we have that all the functions in $\mathfrak F_{hh}$ are uniformly bounded.

We want also to show that they are equicontinuous. We have
\begin{equation}\label{equi}
\begin{split}
v_nh^n|f_{hh}(x+u)-f_{hh}(x)|&\leqq \int_{\Omega(x,u,h)}|f_h(t)|\;  \dif t\\
&\leqq kM(v_n h^n)^{1/q_--1} |\Omega(x,u,h)|
\end{split}
\end{equation}
where $\Omega(x,u,h)=B(x+u,h)\Delta B(x,h)$ and $\Delta$ stands for the symmetric difference of sets.

Since  $B(x+u,h)\subset B(x,|u|+h)$ and $B(x+u,h)\supset B(x,h-|u|)$ whenever $|u|\leqq h$, we have
that
\begin{equation}\label{sets}
\Theta(x,u,h):=B(x,|u|+h)\backslash B(x,h) \cup  B(x,h)\backslash B\left(x,h-|u|\right)\supset \Omega(x,u,h).
\end{equation}
The Lebesgue measure of $\Theta$ is given by
\begin{equation}\label{theta}
\begin{split}
|\Theta(x,u,h)|&=v_n\left \{ [(|u|+h)^n-h^n]+[h^n -(h-|u|)^n]  \right\}\\
&\leqq  2 n v_n|u| (2h)^{n-1},\quad \mathrm{when}\; |u|\leqq h.
\end{split}
\end{equation}
By \eqref{equi}-\eqref{theta} we have the inequality
\[|f_{hh}(x+u)-f_{hh}(x)|\leqq C |u|\]
whenever $|u| \leqq h$ and $h$ is  fixed and sufficiently small, which proves the
equicontinuity of $\mathfrak F_{hh}$.

Thus, for an arbitrary fixed $h>0$ such that $v_nh^n\leqq 1$, the set $\mathfrak F_{hh}$ is
relatively compact in $C(\overline\Omega)$ due to Ascoli-Arzel\`a proposition \ref{AscArz}.
By the fact that $C(\overline\Omega)$ is dense in $L^{p(\cdot)}(\overline\Omega)$ we can take an $\varepsilon/2$ approximation by continuous functions, and from this $\varepsilon/2$ approximation we can obtain a finite $\varepsilon/2$-net of $\mathfrak F_{hh}$ to $C(\overline\Omega)$, entailing  a finite $\varepsilon$-net of $\mathfrak F_{hh}$ to $L^{p(\cdot)}(\overline\Omega)$, which gives relative compactness by Proposition \ref{hausdorff}.
 Finally, relative compactness of $\mathfrak F$ regarding $L^{p(\cdot)}(\overline\Omega)$ follows from condition (i), relative compactness of $\mathfrak F_{hh}$ and the same reasoning as above applied twice, first to $\mathfrak F_h$ and finally to $\mathfrak F$.
\end{proof}

\vspace{3mm}

The condition (ii) of Theorem \ref{kolmogorov}  can be omitted, since it
follows from condition (i), this was proved in \cite{sud} for the case of
classical Lebesgue spaces, which is also valid for the case of variable
exponent Lebesgue spaces, as shown in Lemma \ref{sudakov}.

To this end, we need the following results, where Proposition \ref{nik} is a
statement known in the Riesz-Schauder theory, see e.g. \cite[pp.
283-286]{yosida}

\begin{proposition}\label{nik}
Let $U$ be a linear operator with a compact power. If $\lambda_0 \neq 0$ is not an eigenvalue of $U$, then $\lambda_0$ is in the resolvent set of $U$, i.e., the operator $(U-\lambda_0 I)^{-1}$ is continuous.
\end{proposition}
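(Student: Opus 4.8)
The proposition belongs to classical Riesz--Schauder theory (see \cite[pp.\ 283--286]{yosida}); for completeness I sketch the argument I would give. Fix $m\in\mathbb N$ with $U^m$ compact and put $\mu_0=\lambda_0^m\neq 0$. The plan is to prove the contrapositive: if $\lambda_0\in\sigma(U)$, then $\lambda_0$ is an eigenvalue of $U$. Once it is known that $\lambda_0$ lies in the resolvent set, continuity of $(U-\lambda_0 I)^{-1}$ is automatic by the open mapping theorem on the underlying Banach space. The one algebraic tool used repeatedly is the factorization
\[
\mu_0 I-U^m=(\lambda_0 I-U)\,Q(U)=Q(U)\,(\lambda_0 I-U),\qquad Q(U)=\sum_{k=0}^{m-1}\lambda_0^{k}U^{m-1-k},
\]
where $Q(U)$ is a polynomial in $U$ and hence commutes with $\lambda_0 I-U$.

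First I would pass from $U$ to its compact power. Since $\lambda_0\in\sigma(U)$, the spectral mapping theorem for polynomials (whose inclusion $p(\sigma(U))\subseteq\sigma(p(U))$ follows precisely from the factorization above) gives $\mu_0\in\sigma(U^m)$. As $U^m$ is compact and $\mu_0\neq 0$, Riesz--Schauder theory says that $\mu_0$ is an isolated point of $\sigma(U^m)$, is an eigenvalue, and that the Riesz spectral projection $P$ obtained by integrating $(zI-U^m)^{-1}$ around a small circle about $\mu_0$ has finite rank. Set $N=PX$ (finite dimensional) and write $X=N\oplus\ker P$; both summands are invariant under every bounded operator commuting with $U^m$, in particular under $U$, and the standard properties of $P$ give $\sigma(U^m|_N)=\{\mu_0\}$ and $\mu_0\notin\sigma(U^m|_{\ker P})$.

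Next I would transfer this information back to $U$. On $\ker P$ the operator $\mu_0 I-U^m$ is invertible, so restricting the factorization to the ($U$- and $Q(U)$-invariant) subspace $\ker P$ shows that $\lambda_0 I-U$ is both injective and surjective there, i.e. $\lambda_0\notin\sigma(U|_{\ker P})$. Since $\sigma(U)=\sigma(U|_N)\cup\sigma(U|_{\ker P})$ and $\lambda_0\in\sigma(U)$, we must have $\lambda_0\in\sigma(U|_N)$; but $N$ is finite dimensional, so $\sigma(U|_N)$ consists of eigenvalues, whence $\lambda_0$ is an eigenvalue of $U|_N$, a fortiori of $U$. This is the contradiction sought, so $\lambda_0$ lies in the resolvent set.

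The only step that is more than bookkeeping is the passage through the compact power: a bare application of the Fredholm alternative to $U^m$ does \emph{not} suffice, because $\mu_0$ being an eigenvalue of $U^m$ need not make $\lambda_0$ an eigenvalue of $U$ --- some other $m$-th root of $\mu_0$ could be the one carrying the eigenvalue. Splitting off the finite-dimensional spectral subspace $N$ is exactly what pins the eigenvalue to $\lambda_0$ itself, and that is where I expect the main subtlety to lie.
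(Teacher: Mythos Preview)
Your sketch is correct, but note that the paper does not actually prove Proposition~\ref{nik} at all: it is stated without proof as ``a statement known in the Riesz--Schauder theory, see e.g.\ \cite[pp.\ 283--286]{yosida}'', and is then used as a black box in the proof of Lemma~\ref{sudakov}. So there is nothing to compare against; you have supplied a proof where the paper supplies only a citation.

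For what it is worth, your argument is sound. The factorization $\mu_0 I-U^m=(\lambda_0 I-U)Q(U)=Q(U)(\lambda_0 I-U)$ gives $\mu_0\in\sigma(U^m)$ whenever $\lambda_0\in\sigma(U)$; the Riesz projection $P$ for the compact operator $U^m$ at $\mu_0$ commutes with $U$ (since $U$ commutes with $U^m$), so the splitting $X=N\oplus\ker P$ is $U$-invariant; invertibility of $\mu_0 I-U^m$ on $\ker P$ forces $\lambda_0 I-U$ to be bijective there via the factorization; and finite dimensionality of $N$ pins $\lambda_0$ as an eigenvalue. Your remark that a naive Fredholm alternative on $U^m$ alone is insufficient---because some other $m$-th root of $\mu_0$ might carry the eigenvector---is well taken and is precisely why the spectral decomposition is needed. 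One tacit assumption you rely on (and which the paper also leaves implicit) is that $U$ is bounded; in the application $U$ is the Steklov averaging operator, so this is harmless.
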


\begin{lemma}
Let $G$ be a subset of a normed space and, for some $K>0$, we have that for all $x\in G\; \| Ux-x\|<K$. Then for such $x$
\[
\|x\|\leq K \|(U-I)^{-1}\|
\]
i.e., the set $G$ proves to be bounded whenever $(U-I)^{-1}$ is a bounded operator.
\label{apr}
\end{lemma}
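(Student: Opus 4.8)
The plan is to exploit the elementary identity $x = (U-I)^{-1}(U-I)x$, which is legitimate for every $x$ in the normed space precisely because we are assuming that $(U-I)^{-1}$ exists (at least as a left inverse) and is a bounded operator. First I would fix an arbitrary $x \in G$ and write
\[
x = (U-I)^{-1}(U-I)x = (U-I)^{-1}(Ux - x).
\]
Taking norms and using submultiplicativity of the operator norm then yields
\[
\|x\| = \left\| (U-I)^{-1}(Ux - x) \right\| \leq \left\| (U-I)^{-1} \right\| \, \| Ux - x \| .
\]
Finally I would insert the hypothesis $\|Ux - x\| < K$, valid for all $x \in G$, to obtain $\|x\| \leq K \left\| (U-I)^{-1} \right\|$, which is exactly the claimed estimate; in particular $\sup_{x \in G}\|x\| < \infty$ as soon as $(U-I)^{-1}$ is a bounded operator.

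There is essentially no obstacle: the argument is a one-line manipulation. The only point deserving a word of care is that the very first displayed identity requires $(U-I)^{-1}$ to be a genuine inverse of $U-I$ — a left inverse suffices — which is precisely the standing assumption of the lemma. Thus no appeal to Proposition \ref{nik} or to any compactness is needed here; that proposition will enter only later, when this lemma is applied (with $U$ the Steklov operator $f \mapsto f_h$ and $\lambda_0 = 1$) to deduce that condition (ii) of Theorem \ref{kolmogorov} is redundant.
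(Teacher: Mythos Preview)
Your argument is correct and is precisely the natural one-line computation; the paper in fact states Lemma~\ref{apr} without proof, so there is nothing further to compare.
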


We can now prove the following lemma.

\begin{lemma}\label{sudakov}
Let $\mathfrak F$ be a subset of $L^{p(\cdot)}(K)$, where $K$ is a bounded
closed set in $\mathbb R^n$ and let $p\in\mathcal{P}_\infty(K)\cap
w$-$Lip(K)$. Then condition $\mathrm{(i)}$ of Theorem \ref{kolmogorov}
implies condition $\mathrm{(ii)}$.
\end{lemma}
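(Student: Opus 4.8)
The plan is to recast condition (i) as the assertion that $\mathfrak F$ lies within uniformly bounded $L^{p(\cdot)}$-distance of its image under a single Steklov averaging, and then to feed this into the Riesz--Schauder statements Proposition \ref{nik} and Lemma \ref{apr}. Concretely, fix $h>0$ with $v_nh^n\leqq 1$ and consider the bounded linear operator $U\colon L^{p(\cdot)}(K)\to L^{p(\cdot)}(K)$ given by $Uf=(\Phi_h\ast f)|_K$ (functions being extended by zero off $K$); its boundedness is Lemma \ref{steklovappro}(i). The first step is to observe that $U$ has a \emph{compact power}: rereading the sufficiency argument in the proof of Theorem \ref{kolmogorov}, a bounded set $\mathfrak G\subset L^{p(\cdot)}(K)$ is carried by $U$ into a family of uniformly bounded functions, and then by $U$ once more into a family that is simultaneously uniformly bounded and equicontinuous, hence relatively compact in $C(\overline\Omega)$ by Ascoli--Arzel\`a (Proposition \ref{AscArz}) and therefore relatively compact in $L^{p(\cdot)}(K)$ since $K$ is bounded; thus $U^2$ is compact.

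The heart of the matter, and the step I expect to require genuine work, is to verify that $\lambda_0=1$ is not an eigenvalue of $U$. Suppose $Uf=f$ for some $f\in L^{p(\cdot)}(K)$. Since the right-hand side is the restriction to $K$ of the continuous function $\Phi_h\ast f$, the function $f$ has a continuous representative on the compact set $K$, so $M:=\max_{K}|f|$ is attained at some $x_0\in K$; after replacing $f$ by $-f$ we may assume $f(x_0)=M$, and we suppose for contradiction that $M>0$. Writing $f(x_0)=\frac{1}{v_nh^n}\int_{B(x_0,h)}f(t)\,\dif t$ with $f$ extended by zero, the chain $M=\frac{1}{v_nh^n}\int_{B(x_0,h)\cap K}f\leqq\frac{1}{v_nh^n}\int_{B(x_0,h)\cap K}|f|\leqq M\,\frac{|B(x_0,h)\cap K|}{v_nh^n}\leqq M$ must be an equality throughout, which forces $|B(x_0,h)\setminus K|=0$ — hence $B(x_0,h)\subseteq K$, $K$ being closed — and $f\equiv M$ a.e. on $B(x_0,h)$, hence everywhere on it by continuity. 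Running the same argument at every point of $B(x_0,h)$ shows that the (nonempty) set $\{x\in K:f(x)=M\}$ contains, around each of its points $x$, the ball $B(x,h)$; iterating yields $B(x_0,nh)\subseteq K$ for all $n\in\mathbb N$, contradicting the boundedness of $K$. Therefore $M=0$, so $f=0$, and $1$ is not an eigenvalue of $U$.

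Finally I would assemble the pieces. By condition (i), choose $h>0$ with $v_nh^n\leqq 1$ small enough that $\|f_h-f\|_{p(\cdot)}<1$ for every $f\in\mathfrak F$, that is, $\|(U-I)f\|_{p(\cdot)}<1$ on $\mathfrak F$. Because $U$ has a compact power and $1$ is not an eigenvalue of $U$, Proposition \ref{nik} gives that $(U-I)^{-1}$ is a bounded operator, and Lemma \ref{apr} (applied with constant $1$) then yields $\|f\|_{p(\cdot)}\leqq\|(U-I)^{-1}\|$ for all $f\in\mathfrak F$, which is exactly condition (ii). In short, everything except the eigenvalue verification is routine assembly of facts already in place; the maximum-principle/propagation argument that rules out $1$ as an eigenvalue of the Steklov operator is the single place that needs a fresh idea.
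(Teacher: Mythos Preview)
Your proposal is correct and follows essentially the same route as the paper: establish that $U^2$ is compact by pointing back to the sufficiency argument of Theorem \ref{kolmogorov}, rule out $\lambda=1$ as an eigenvalue of the Steklov operator via a maximum-principle argument, and then feed these facts into Proposition \ref{nik} and Lemma \ref{apr}. The only cosmetic difference is in the eigenvalue step --- the paper derives the contradiction at a boundary point of the set where the maximum is attained rather than via your ball-propagation showing $B(x_0,nh)\subseteq K$ for all $n$ --- but the underlying idea is the same.
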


\begin{proof}
By Proposition \ref{nik} and Lemma \ref{apr}, if $\lambda =1$ is not an eigenvalue of $U$, the boundedness of $G$ is a consequence of a uniform approximation of elements  $x\in G$ by elements of $Ux$ whenever $U$ has some power of it being compact.

In our case, the operator $U$ is given by the Steklov function, i.e., $Ux=\Phi_h\ast x$, it is compact in square in $L^{p(\cdot)}(K)$ (see the sufficiency part in the proof of Theorem \ref{kolmogorov}) and the uniform approximation property is given by condition (i) of Theorem \ref{kolmogorov}.
 Therefore we only need to  show that $\lambda=1$ cannot be an eigenvalue of the Steklov operator.

Let us suppose, for the sake of contradiction, that exists $L^{p(\cdot)}(K)\ni x(t) \neq 0$ such that
\begin{equation}\label{equ}
\Phi_h \ast x(t)=x(t)\quad \textrm{for}\quad t\in K,
\end{equation}
 where we continue the function $x(t)$ by zero beyond $K$.

 The function $\Phi_h \ast x(t)$ is continuous and vanishes beyond some bounded set. Being continuous attains its maximum and minimum and at the least one of them is different from $0$.

Let us suppose that $M=\max_{t} \Phi_h \ast x(t)>0$.
We define $P$ has the set on $\mathbb R^n$ such that $\Phi_h x(t)$ attains its maximum. $P$ is closed and bounded.
Let us choose an arbitrary boundary point $t_0 \in \partial P \cap K$.% (this cannot be $\emptyset$ since $M>0$).
Then the ball $B(t_0,h)$ contains a set of positive measure in $K$ such that $x(t)=\Phi_h\ast x(t)<M$, this by \eqref{equ} and the definition of $P$,
but then $\Phi_h \ast x(t_0)$ cannot attain the maximum since it is an average of $x(t)$ inside $B(t_0,h)$. The obtained contradiction proves the statement.
\end{proof}

By Theorem \ref{kolmogorov} and Lemma \ref{sudakov} we obtain

\begin{theorem}\label{kolmogorov1}
Let $\mathfrak F$ be a subset of $L^{p(\cdot)}(\overline\Omega)$, where $\Omega$ is a bounded open set in
 $\mathbb R^n$ and let $p\in\mathcal{P}_\infty(\overline\Omega)\cap w$-$Lip(\overline\Omega)$. The set $\mathfrak F$ is relatively compact if and only if the
following condition is satisfied:
\[ \lim\limits_{h \to 0} \| f_h -f \|_{p(\cdot)}=0 \quad \textrm{uniformly for}\quad f \in \mathfrak F.\]
\end{theorem}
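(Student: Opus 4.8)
The plan is to obtain Theorem \ref{kolmogorov1} as an immediate consequence of the two results already established, namely Theorem \ref{kolmogorov} and Lemma \ref{sudakov}. The only nontrivial content is that conditions (i) and (ii) of Theorem \ref{kolmogorov} can be replaced by condition (i) alone; everything else is a direct citation.

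First I would note that the direction ``relatively compact $\Rightarrow$ (i)'' is exactly the necessity part of Theorem \ref{kolmogorov}, so nothing new is needed there. For the converse, I would start from the hypothesis that $\lim_{h\to 0}\|f_h-f\|_{p(\cdot)}=0$ uniformly for $f\in\mathfrak F$, i.e.\ condition (i) holds. By Lemma \ref{sudakov} — applied with $K=\overline\Omega$, which is indeed a bounded closed set in $\mathbb R^n$, and with the same exponent $p\in\mathcal P_\infty(\overline\Omega)\cap w\text{-}Lip(\overline\Omega)$ — condition (i) implies condition (ii), the boundedness of $\mathfrak F$ in $L^{p(\cdot)}(\overline\Omega)$. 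Thus both hypotheses of the sufficiency part of Theorem \ref{kolmogorov} are met, and we conclude that $\mathfrak F$ is relatively compact.

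There is essentially no obstacle here: the work has been done in Theorem \ref{kolmogorov} (the Ascoli–Arzel\`a argument applied to the doubly averaged family $\mathfrak F_{hh}$) and in Lemma \ref{sudakov} (the Riesz–Schauder argument showing $\lambda=1$ is not an eigenvalue of the Steklov operator, hence $(U-I)^{-1}$ is bounded and the a priori bound of Lemma \ref{apr} applies). The one point worth a sentence of care is simply to check that the hypotheses of Lemma \ref{sudakov} match those of Theorem \ref{kolmogorov1} — they do, verbatim, since a bounded open set $\Omega$ has $\overline\Omega$ bounded and closed. So the proof is a two-line deduction: (i) $\Leftrightarrow$ necessity of Theorem \ref{kolmogorov}; and (i) $\Rightarrow$ (ii) by Lemma \ref{sudakov}, after which Theorem \ref{kolmogorov} gives relative compactness. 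I would present it as:

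\begin{proof}
If $\mathfrak F$ is relatively compact, then condition (i) holds by the necessity part of Theorem \ref{kolmogorov}. Conversely, suppose condition (i) holds. Since $\Omega$ is a bounded open set, $\overline\Omega$ is a bounded closed set in $\mathbb R^n$, and $p\in\mathcal P_\infty(\overline\Omega)\cap w\text{-}Lip(\overline\Omega)$, so Lemma \ref{sudakov} applies and yields that $\mathfrak F$ is bounded in $L^{p(\cdot)}(\overline\Omega)$, i.e.\ condition (ii) of Theorem \ref{kolmogorov} is satisfied. By the sufficiency part of Theorem \ref{kolmogorov}, $\mathfrak F$ is relatively compact.
\end{proof}
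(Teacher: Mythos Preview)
Your proposal is correct and is precisely the paper's approach: the paper states Theorem \ref{kolmogorov1} as an immediate consequence of Theorem \ref{kolmogorov} and Lemma \ref{sudakov}, and your write-up simply makes that deduction explicit.
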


\section*{Acknowledgements}
I would like to thank Prof. S. Samko for  helpful suggestions during the
preparation of  the paper.
I would also like to acknowledge the financial support of
\textit{Funda\c c\~ao para a Ci\^encia e a Tecnologia} (FCT) through grant
Nr. SFRH/ BD/ 22977 / 2005 of the Portuguese Government.

\end{document}